\newtheorem{theorem}{Theorem}[section]
\newtheorem{lemma}{Lemma}[section]
\def\ad#1{\begin{aligned}#1\end{aligned}}  \def\b#1{{\mathbf{#1}}}\def\hb#1{\hat{\mathbf{#1}}}
\def\a#1{\begin{align*}#1\end{align*}} \def\an#1{\begin{align}#1\end{align}} 
 \def\t#1{{\operatorname{#1}}}
\def\p#1{\begin{pmatrix}#1\end{pmatrix}}  \numberwithin{equation}{section} 
\def\d{\operatorname{div}}   
\def\ncb{\text{$P_3^{\t{nc}}$}} \def\cq{\text{$P_3^{\t{c}}$}}
\begin{document} \baselineskip=16pt\parskip=4pt

\title[nonconforming P3 finite element]
 {A nonconforming P3 and discontinuous P2 mixed finite element on tetrahedral grids }

\author {Xuejun Xu}
\address{School of Mathematical Science, \ Tongji University, \ Shanghai, \ 200092, \ China}
	\address{Institute of Computational Mathematics, AMSS, Chinese Academy of Sciences, Beijing, 100190, China}
\email{xxj@lsec.cc.ac.cn}

\author { Shangyou Zhang }
\address{Department of Mathematical
            Sciences, University
     of Delaware, Newark, DE 19716, USA. }
\email{szhang@udel.edu }


\subjclass{Primary, 65N15, 65N30,  76M10}

\keywords{discontinuous finite element, nonconforming finite element, mixed finite element, 
  Stokes equations, tetrahedral grid.}

\begin{abstract}
 A nonconforming $P_3$ finite element  is constructed by enriching
   the conforming $P_3$ finite element space with three
   $P_3$ nonconforming bubbles and six additional $P_4$ nonconforming bubbles, on each tetrahedron.
Here the divergence of the $P_4$ bubble is not a $P_3$ polynomial, but a $P_2$ polynomial.
This nonconforming $P_3$ finite element, 
   combined with the discontinuous $P_2$ finite element, 
     is inf-sup stable for solving the Stokes equations 
     on general tetrahedral grids.
Consequently such a mixed finite element method produces quasi-optimal
  solutions for solving the stationary Stokes equations. 
With these special $P_4$ bubbles, the discrete velocity remains locally pointwise divergence-free.
  Numerical tests confirm the theory.
 
\end{abstract}

\maketitle

\section{Introduction}

We solve the following Stokes equations by a $P_3^{\t{\tiny nc}}$-$P_2^{\t{\tiny dis}}$
     mixed finite element method.    Find the velocity 
  $\b u $ and  the pressure $p$ on a  
  3D polyhedral domain  $\Omega$ such that
  \an{\label{e1-1} &&  - \Delta \b u  + \nabla p
            &=\b f \qquad && \hbox{in } \Omega, && \\
       \label{e1-2}        &&  \d \b u  &= 0 && \hbox{in } \Omega, \\
       \label{e1-3}        &&  \b u  &= \b 0  && \hbox{on } \partial\Omega,  }
  where $\b f \in L^2(\Omega)$.  The variational form of \eqref{e1-1}--\eqref{e1-3} reads:  
    Find $\b u\in\b V=H_0^1(\Omega)^3$ and 
    $p\in P= L^2_0(\Omega)$  such that 
\a{ \ad{  (\nabla \b u, \nabla \b v)- (\d \b v,p) &=(\b f,\b v)  
             &&   \forall \b v \in \b V, \\
        (\d \b u,q)  &=0   &&  \forall q \in P. }
   }  
When choosing the finite element discretization spaces for $\b V$ and $P$, most pairs are not stable,
  i.e., they fail to satisfy the inf-sup condition \eqref{inf} below. 
A natural pair of finite elements is
  the  $P_k^{\t{c}}$-$P_{k-1}^{\t{dis}}$ mixed element where $P_k^{\t{c}} \subset \b V$ stands for 
   the space of continuous piecewise vector $P_k$ polynomials of a tetrahedral mesh, 
   and $P_{k-1}^{\t{dis}} \subset P$ is the space of discontinuous $P_{k-1}$ polynomials on the mesh.
In most cases such a method is not stable. On Hsieh-Clough-Tocher macro 
  triangular/tetrahedral grids
   \cite{Guzman-Neilan,Qin,Xu-Zhang,Zhang-3D},
  the full $P_k^{\t{c}}$/$P_{k-1}^{\t{dis}}$ space, $k\ge 2$ in 2D or $k\ge 3$ in 3D, 
   is inf-sup stable.
In all other known stable cases, the pressure space is a proper subspace of 
  the $P_{k-1}^{\t{dis}}$ space  
\cite{Arnold-Qin,Bacuta,Fabien-Neilan,Falk-Neilan,Fu-Guzman-Neilan,Guzman-Lischke-Neilan,Huang-Q2,Neilan,Scott-Vogelius,Scott-V,Zhang-3D,Zhang-ps2,Zhang-Qk,Zhang-3d-P2,Zhang-6},
except when the discrete velocity is enriched by non-polynomial bubbles or subgrid-bubbles
   \cite{Guzman-Neilan1,Guzman-Neilan2}.

On the other side,  it is relatively easy to find stable 
    $P_k^{\t{nc}}$/$P_{k-1}^{\t{dis}}$
   pairs of finite elements, especially in low polynomial degree cases.
Here $P_k^{\t{nc}}$ stands for the nonconforming finite element space of polynomial degree $k$,
   where the piecewise polynomials are continuous up to $P_{k-1}$ order in the sense that
   the jump of function is orthogonal to $P_{k-1}$ polynomials on each inter-element face, 
    $\int_F [f]p_{k-1}ds=0$.
Crouzeix and Raviart proposed first nonconforming finite elements in
   \cite{Crouzeix-Raviart}.   
In this first paper,  $P_1^{\t{nc}}$/$P_{0}^{\t{dis}}$ elements are proved to be inf-sup stable, 
   on triangular and tetrahedral grids.  
The proposed 2D $P_3^{\t{nc}}$/$P_{2}^{\t{dis}}$ element \cite{Crouzeix-Raviart}
    is enriched by 12 $P_4^{\t{nc}}$-bubbles.
It is proved that higher-order bubbles are not needed in \cite{Crouzeix-Falk} if the   
     triangular grid  can be separated in to macro-triangles of several patterns.

Fortin and Soulie proved the inf-sup stability for the
     2D $P_2^{\t{nc}}$/$P_1^{\t{dis}}$ mixed finite element in \cite{Fortin-2D}.
 The 2D $P_2^{\t{nc}}$ space in \cite{Fortin-2D} is constructed by 
    enriching the $P_2^{\t{c}}$ space by 2 $P_2^{\t{nc}}$ bubbles on each triangle. 
For 2D $P_k^{\t{nc}}$/$P_{k-1}^{\t{dis}}$  elements,  Matthies and Tobiska enrich the
   velocity space by many higher-order nonconforming bubble functions so that the method
   is stable for all $k\ge 1$ \cite{Matthies-Tobiska}.
Like \cite{Fortin-2D}, \cite{Baran} adds only two $P_k^{\t{nc}}$ bubble
  to the $P_k^{\t{c}}$ conforming finite element velocity space on each triangle 
   so that the $P_k^{\t{nc}}$-$P_{k-1}^{\t{dis}}$ is stable for even polynomial 
      degree $k\ge 2$.

In 3D, there are only two works \cite{Ciarlet} and  \cite{Fortin}
   attempting to find all $P_2^{\t{nc}}$ bubbles.
However, an incorrect work on the 3D $P_2^{\t{nc}}$-$P_{1}^{\t{dis}}$ mixed finite element
  was published in Math. Comp. \cite{Sauter}, where the $P_2^{\t{c}}$ space is enriched 
     by 3 internal $P_2^{\t{nc}}$ bubbles.
\cite{Sauter} simply quoted the Stenberg macro-element theorem \cite{Stenberg} as the proof
   while missing a condition of Stenberg that there must be at least one degree of freedom inside
      each macro-element face.
But the proposed $P_2^{\t{nc}}$ element has no inner face-degree of freedom as
    both $P_2^{\t c}$ functions and the three internal-tetrahedron $P_2^{\t{nc}}$ bubbles 
     have no such a degree of freedom.
The unstable \cite{Sauter} $P_2^{\t{nc}}$-$P_{1}^{\t{dis}}$ element can be stable on most
  structure meshes,  but not on a general mesh, for example, none of two neighboring triangles is
       on a plane.
 A correct 3D $P_2^{\t{nc}}$-$P_{1}^{\t{dis}}$ mixed finite element is
        constructed in \cite{Zhang-P2nc}, where the $P_2^{\t{c}}$ space is enriched
         by 3 internal $P_2^{\t{nc}}$ bubbles as in \cite{Sauter} and 4 additional
         face  $P_2^{\t{nc}}$ bubbles.

In this work,  we study the construction of stable 
  $P_3^{\t{nc}}$-$P_{2}^{\t{dis}}$ mixed finite element in 3D.
It is a challenge as the element cannot be stabilized by enriching
    the $P_3^{\t{c}}$ space with only $P_3^{\t{nc}}$ bubbles.
\cite{Sauter} also discussed an unstable 
  $P_3^{\t{nc}}$-$P_{2}^{\t{dis}}$ mixed finite element in 3D, where they enrich the
   $P_3^{\t{c}}$ space by 12 face $P_3^{\t{nc}}$ bubbles while dropping
   12 vertex basis functions of the $P_3^{\t{c}}$ space, on each tetrahedron.
We propose to enrich the  $P_3^{\t{c}}$ space by  3 (all exist) inner $P_3^{\t{nc}}$ bubbles
   and 6 (well chosen) inner $P_4^{\t{nc}}$ bubbles, on each tetrahedron.
Here each $P_4^{\t{nc}}$ bubble has vanishing $P_2$ moments on each of four face triangles and
   has its divergence as a $P_2$ polynomial instead of a $P_3$ polynomial.
We will show the added bubble functions are linearly independent with the $P_3^{\t{c}}$ functions.
With such special $P_4^{\t{nc}}$ bubbles, the resulting discrete velocity remains
    pointwise divergence-free on each tetrahedron.
It is shown such a 3D $P_3^{\t{nc}}$-$P_{2}^{\t{dis}}$ mixed finite element is inf-sup stable
  on tetrahedral meshes and produces quasi-optimal solutions.
A numerical test is presented, verifying the theory.

Another stable 3D
  $P_3^{\t{nc}}$-$P_{2}^{\t{dis}}$ mixed finite element is constructed in \cite{Chen-Hu},
  where the $P_3^{\t{c}}$ space is enriched by 33 inner $P_4^{\t{nc}}$ bubbles.
These bubbles do have vanishing $P_2$ moments on each of four face triangles, 
   but have $P_3$ polynomial divergences instead of desired $P_2$ polynomial divergences.
We believe the 6 additional $P_4^{\t{nc}}$ bubbles proposed in this work are the least number
  of bubbles, and the lowest-degree bubbles, for stabilizing the $P_3^{\t{nc}}$-$P_{2}^{\t{dis}}$
    element on tetrahedral meshes.

\section{The $\b B_4$ bubbles and the mixed $P_3^{\t{\tiny nc}}$-$P_2^{\t{\tiny dis}}$ finite element }

Let $\mathcal{T}_h=\{T_1,\cdots,T_{n_t}\}$ be a quasiuniform  
	tetrahedral grid on a 3D polyhedral $\Omega$.
Let $\mathcal{V}_h^0=\{\b x_1,\cdots,\b x_{n_v}\}$ 
   be the set of interior vertex
   in the grid $\mathcal{T}_h$.
Let $\mathcal{E}_h^0=\{\b e_1,\cdots,\b e_{n_e}\}$ 
    be the set of interior edges (mid-point inside $\Omega$) 
   in the grid $\mathcal{T}_h$.
Let $\mathcal{F}_h$ be the set of 
     triangles in the grid $\mathcal{T}_h$. 
Let $\mathcal{F}_h^0=\{F_1,\dots,F_{n_f}\}$ be the set of 
   interior triangles (whose bary-center are inside $\Omega$) 
   in the grid $\mathcal{T}_h$. 
 
 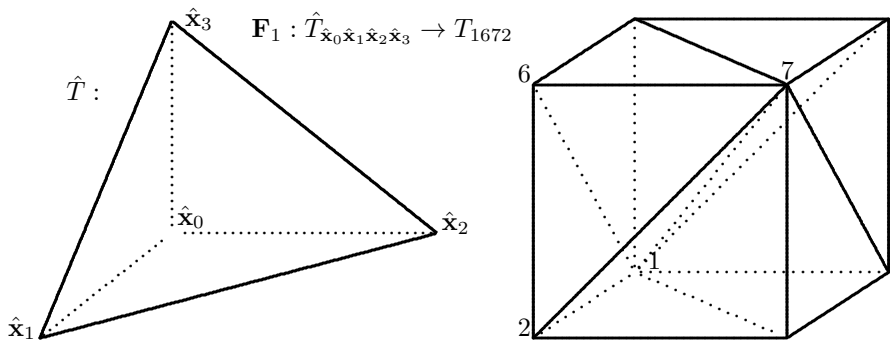
\begin{figure}[htb] 
  \setlength{\unitlength}{1pt}

 \begin{center}\begin{picture}(320.,  130.)(  0.,  0.)
     \def\lb{\circle*{0.8}}\def\lc{\vrule width1.2pt height1.2pt}
     \def\la{\circle*{0.3}}
   \put(-12,1){$\hb x_1$}\put(152,41){$\hb x_2$}\put(55,118){$\hb x_3$}
       \put(52,43){$\hb x_0$} \put(10,90){$\hat T:$}  
     \multiput(   1.56,   1.25)(   2.343,   1.874){ 20}{\la}
     \multiput( 148.00,  40.00)(  -3.000,   0.000){ 32}{\la}
     \multiput(  50.00, 118.00)(   0.000,  -3.000){ 26}{\la}
     \multiput(   0.00,   0.00)(   0.096,   0.231){520}{\la}
     \multiput(   0.00,   0.00)(   0.242,   0.064){620}{\la}
     \multiput( 150.00,  40.00)(  -0.195,   0.156){512}{\la}
    \put(80,114){$\b F_1: \hat T_{\hb x_0\hb x_1\hb x_2\hb x_3} \to T_{1672}$ } 
   \put(180,0){  \setlength{\unitlength}{1.2pt}
     \begin{picture}(  112.,  100.800003)( -32., -20.7999992)
     \def\lb{\circle*{0.8}}\def\lc{\vrule width1.2pt height1.2pt}
     \def\la{\circle*{0.3}}
    \put(4,1){1}\put(-37,60){6}\put(46,61){7}\put(-37,-20){2}
     \multiput(  -1.68,  -1.09)(  -2.515,  -1.635){ 12}{\la}
     \multiput(  80.00,   0.00)(  -0.210,  -0.136){152}{\la}
     \multiput(   0.00,  80.00)(  -0.210,  -0.136){152}{\la}
     \multiput(  80.00,  80.00)(  -0.210,  -0.136){152}{\la}
     \multiput(   2.00,   0.00)(   3.000,   0.000){ 26}{\la}
     \multiput( -32.00, -20.80)(   0.250,   0.000){320}{\la}
     \multiput(   0.00,  80.00)(   0.250,   0.000){320}{\la}
     \multiput( -32.00,  59.20)(   0.250,   0.000){320}{\la}
     \multiput(   0.00,   2.00)(   0.000,   3.000){ 26}{\la}
     \multiput( -32.00, -20.80)(   0.000,   0.250){320}{\la}
     \multiput(  80.00,   0.00)(   0.000,   0.250){320}{\la}
     \multiput(  48.00, -20.80)(   0.000,   0.250){320}{\la}
     \multiput(   1.41,   1.41)(   2.121,   2.121){ 36}{\la}
     \multiput( -32.00, -20.80)(   0.177,   0.177){452}{\la}
     \multiput(  -0.95,   1.76)(  -1.427,   2.639){ 22}{\la}
     \multiput(  80.00,   0.00)(  -0.119,   0.220){269}{\la}
     \multiput(   1.84,  -0.80)(   2.753,  -1.193){ 16}{\la}
     \multiput(   0.00,  80.00)(   0.229,  -0.099){209}{\la}
     \multiput(   1.26,   1.55)(   1.889,   2.330){ 24}{\la}

       \end{picture} }

 \end{picture}\end{center}

\caption{The unit reference tetrahedron $\hat T$, an affine mapping $\b F_1$ and a general tetrahedron 
    $T_{1672}$ (on a unit cube).}
\label{T}
\end{figure} 
 
Let $\hat T$ be the reference tetrahedron $\{ 0 \le x_1, x_2, x_3, 1-x_1-x_2-x_3 \le 1\}$,
    cf. Figure \ref{T}. 
We define only one $P^{\t{nc}}_4$ bubble on $\hat T$ and map it to the 9 needed $P^{\t{nc}}_4$ 
     bubbles on a general tetrahedron  $T\in \mathcal {T}_h$.
     We note that these 9 $P^{\t{nc}}_4$ bubbles contain the 3 internal $P^{\t{nc}}_3$ bubbles
       as linear combinations.
The bubble function $\hb b\in [ P_4 (\hat T) ]^3$ has vanishing $P_2$-moments on the four faces of 
   $\hat T$,
\an{\label{4F}
    \int_{\hat F_i} (\hb b)_j p_2 \; \t d F = 0,  \quad i=1,\dots, 4, \quad j=1,2,3, 
    \quad p_2 \in P_2(\hat F_i), }
    and has only a $P_2$ divergence instead of a $P_3$ divergence,
\an{\label{P-2} \widehat { \t{div} }\hb b \in P_2(\hat T).  }
By \eqref{4F},  we have
\an{\label{0-b}  \int_{\hat T} \widehat { \t{div} }\hb b \; \t d T = 0.  }
The dimension of the $P_2$ space satisfying \eqref{0-b} is $9$.
We choose one such a $P_2$ function
\an{\label{p-2-r} \widehat { \t{div} }\hb b = 4 \hat x_1(\hat x_1 - \hat x_2 - \hat x_3). }
By the constraints \eqref{4F}--\eqref{p-2-r},  $\hb b$ is on a $14$-dimensional manifold.  
That is, $\hb b$ has $14$ free parameters.
We choose a relatively simple one, $\hb b=( (\hb b)_1 \quad  (\hb b)_2 \quad  (\hb b)_3 )^T$ where
\a{
 (\hb b)_1 &=\frac{263}{12} x^{4}+38 x^{3} y+\frac{265}{3} x^{3} z
     +29 x^{2} y^{2}+96 x^{2} y z+\frac{209}{2} x^{2} z^{2}-16 x \,y^{3}\\
     &\quad \ +42 x \,y^{2} z+42 x y \,z^{2}+\frac{103}{3} x \,z^{3}
     +\frac{7}{6} y^{4}-y^{2} z^{2}-\frac{335}{12} z^{4}-\frac{253}{6} x^{3}\\
     &\quad \ -\frac{87}{2} x^{2} y-119 x^{2} z+\frac{21}{2} x \,y^{2}-56 x y z
     -65 x \,z^{2}+\frac{112}{3} z^{3}+\frac{703}{28} x^{2}\\
     &\quad \ +\frac{7}{2} x y+\frac{251}{7} x z-\frac{41}{28} y^{2}
        +\frac{2}{7} y z-\frac{169}{14} z^{2}
         -\frac{181}{42} x+\frac{13}{21} y+\frac{73}{840},
         }
\a{ (\hb b)_2 &=-\frac{233}{3} x^{3} y+\frac{67}{9} x^{3} z-\frac{233}{2} x^{2} y^{2}
    -235 x^{2} y z+\frac{163}{6} x^{2} z^{2}-\frac{203}{3} x \,y^{3}\\
     &\quad \ -225 x \,y^{2} z -209 x y \,z^{2}+\frac{301}{9} x \,z^{3}
     -\frac{64}{3} y^{3} z-21 y^{2} z^{2}      -\frac{16}{3} y \,z^{3}\\
     &\quad \ +\frac{113}{18} x^{3}+\frac{1105}{8} x^{2} y+\frac{155}{24} x^{2} z
     +\frac{1025}{8} x \,y^{2}+\frac{1077}{4} x y z-\frac{469}{24} x \,z^{2}\\
     &\quad \ +\frac{79}{8} y^{3}+\frac{417}{8} y^{2} z+\frac{289}{8} y \,z^{2}
        -\frac{199}{72} z^{3} -\frac{625}{56} x^{2}-\frac{505}{7} x y\\
     &\quad \ -\frac{94}{7} x z-\frac{447}{28} y^{2}
      -\frac{251}{7} y z+\frac{317}{56} x+\frac{625}{84} y+\frac{101}{42} z-\frac{383}{630}, }
      and
\a{ (\hb b)_3 &= -10 x^{3} y-10 x^{3} z+x^{2} y^{2}+119 x^{2} y z-15 x^{2} z^{2}
        +16 x \,y^{3} \\
     &\quad \  +145 x \,y^{2} z+129 x y \,z^{2}+16 y^{3} z  +11 y^{2} z^{2}
        -\frac{29}{4} z^{4}  -\frac{93}{8} x^{2} y\\
     &\quad \  -\frac{61}{8} x^{2} z-\frac{301}{8} x \,y^{2} -\frac{693}{4} x y z
        -\frac{141}{8} x \,z^{2}-\frac{13}{4} y^{3}-\frac{321}{8} y^{2} z \\
     &\quad \   -\frac{193}{8} y \,z^{2}+\frac{77}{8} z^{3}+\frac{181}{56} x^{2}
      +\frac{363}{14} x y+\frac{307}{14} x z+\frac{389}{56} y^{2}\\
     &\quad \ +\frac{199}{7} y z  -\frac{563}{168} x-\frac{33}{8} y-\frac{263}{84} z 
         +\frac{103}{210}.  } 
Here $x=\hat x_1$, $y=\hat x_2$ and $z=\hat x_3$.

Let the affine mappings 
\a{ \b F_i(\hat T) = T_i, \quad i=1,\dots, 9, }
where $T_i$ are the same tetrahedron (cf. Figure \ref{T})  but numbered as, respectively,
\a{ T_{1 6 7 2}, T_{1 7 2 6}, T_{1 2 6 7}, T_{6 7 1 2}, T_{6 1 2 7},  
    T_{6 2 7 1}, T_{7 1 6 2}, T_{7 2 1 6}, T_{2 1 7 6}, \quad\hbox{cf.~Figure \ref{T}}. }

Using the Piola transform,  we define, on tetrahedron $T_{1672}$, 
\an{\label{b-i} \b b_i (\b x) = J_i \hb b(\b F_i^{-1}(\b x)), \quad i=1,\dots, 9, }
where $J_i$ is the Jacobi matrix of mapping $\b F_i$.
Thus $\b b_i$ has vanishing $P_2$-moments on 4 faces and the same divergence.
That is, cf. Figure \ref{T},  on $T_{1672}$, 
\an{ \label{div-b}   \t{div} \b b_i =\begin{cases}
                    4 (y - z) (2 y + x - 2 z),  &\t{if } \ i= 1, \\
                       -4 y (x - 2 y),  &\t{if }  \ i=  2, \\
                     4 (x - z) (x - 2 z),  &\t{if }  \ i=  3, \\
                      4 y (y + z - 1),  &\t{if }  \ i=  4, \\
               4 (2 x + y - z - 1) (x - 1),  &\t{if } \ i=   5, \\
               4 (2 x - y - z - 1) (x - z),  &\t{if }  \ i=  6, \\
            4 (x - 1) (2 x - y - 1),  &\t{if }  \ i=  7, \\
           4 (2 x + y - 2 z - 1) (x - z),  &\t{if } \ i=   8, \\
                 4 (x - 1) (x + z - 1),   &\t{if } \ i=  9. \end{cases}  }

Let $\{l_i, i=1,\dots, N_c\}$ be the Lagrange nodal basis of conforming $P_3$ space on $\mathcal{T}_h$,
  where $N_c$ is equal to the sum of the number of vertices, the number of triangles and
     the double of number of edges in the tetrahedral mesh $\mathcal{T}_h$.
Denote the vector basis by 
\an{\label{phi} \{\phi_j, j=1,\dots, n_c \} = \{ l_i\b e_m, i=1,\dots, N_c, m=1,2,3\} , }
  where $\b e_m$ is the basis vector of $\mathbb{R}^3$. 

The proposed $P_3^{\t{\tiny nc}}$ finite element space is defined by
\an{\label{V-h} \ad{  \b V_h
    =\Big\{ \b v_h=\b v_c+\b v_b :
        & \ \b v_c=\sum_{j=1}^{n_{c}} c_j \phi_j, \ \b v_c|_{\partial \Omega}=\b 0, \\
        & \qquad\qquad \b v_b=\sum_{T\in \mathcal{T}_h} \sum_{i=1}^9 d_i \b b_i \Big\},  } } 
where $\phi_j$ is defined in \eqref{phi} and $\b b_i$ is defined in \eqref{b-i}.

The $P^{\t{dis}}_2$ finite element space, for approximating the pressure, is standard that
\an{\label{P-h}
    P_h =\{ p_h\in L^2_0(\Omega) : p_h|_T\in P_2(T), T\in \mathcal{T}_h \}. }

The \ncb/$P_2^{\t{dis}}$ mixed finite element problem for the Stationary Stokes equations 
    \eqref{e1-1}--\eqref{e1-3} reads:  
    Find $\b u_h\in \b V_h$ and 
    $p\in P_h$  such that 
\an{ \label{f-e-1} && (\nabla \b u_h, \nabla \b v_h)- (\d \b v_h,p_h) &=(\b f,\b v_h)  
             &&   \forall \b v_h \in \b V_h, && \\
        && (\d \b u_h,q_h)  &=0   &&  \forall q_h \in P_h,  
   \label{f-e-2} }  where $\b V_h$ and $P_h$ are defined in \eqref{V-h} and \eqref{P-h},
    respectively.

\begin{lemma}\label{p-d}
   The bilinear form $(\nabla \b u_h, \nabla \b v_h)$ is positive definite on the
  basis of $\b V_h$, defined in \eqref{V-h}.
\end{lemma}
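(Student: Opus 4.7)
The plan is to prove two things: that $a(\b u_h,\b v_h):=(\nabla \b u_h,\nabla \b v_h)=\sum_{T\in\mathcal{T}_h}\int_T\nabla\b u_h:\nabla\b v_h$ is an inner product on $\b V_h$, and that the set $\{\phi_j\}\cup\{\b b_i^T\}_{T\in\mathcal{T}_h,\,i=1,\dots,9}$ is linearly independent. Together these give strict positivity of the stiffness matrix on every nonzero coefficient vector, which is exactly the claim. The first fact is a broken-Sobolev argument using the $P_2$-face-moment orthogonality \eqref{4F} of the bubbles together with the boundary condition $\b v_c|_{\partial\Omega}=\b 0$; the second is a local computation on each tetrahedron that leverages the explicit divergences \eqref{div-b}.

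For the first fact, suppose $a(\b v_h,\b v_h)=0$. Then $\nabla\b v_h|_T=\b 0$, so $\b v_h|_T=\b c_T$ is a constant vector on each $T$. For any interior face $F=T^+\cap T^-$, continuity of $\b v_c$ and \eqref{4F} imply the jump $[\b v_h]|_F=\b c_{T^+}-\b c_{T^-}$ is $L^2(F)$-orthogonal to $P_2(F)$; testing against $1\in P_2(F)$ yields $\b c_{T^+}=\b c_{T^-}$. For a boundary face $F\subset\partial\Omega$, $\b v_c|_F=\b 0$ together with \eqref{4F} at $p_2\equiv 1$ gives $\b c_T|F|=\int_F\b v_h\,dF=0$, hence $\b c_T=\b 0$. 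By connectedness of $\mathcal{T}_h$, $\b c_T=\b 0$ for every $T$, so $\b v_h\equiv\b 0$.

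For the second fact, once $\b v_h\equiv\b 0$ we have $\b v_c|_T=-\sum_{i=1}^9 d_{i,T}\b b_i$ on every $T$. Direct inspection of the nine $P_2$ polynomials in \eqref{div-b} shows they are linearly independent in $P_2(T)$, so the nine bubbles $\b b_1,\dots,\b b_9$ are themselves linearly independent; moreover any nontrivial combination of them has vanishing $P_2$-face moments on all four faces of $T$, while a nonzero continuous $P_3$ trace arising from $\sum_j c_j\phi_j|_T$ cannot satisfy this across all faces consistently with global continuity. This separation forces $d_{i,T}=0$, whence $\b v_c\equiv\b 0$ and then $c_j=0$ by unisolvence of the conforming $P_3$ Lagrange basis.

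The main obstacle is this last step: ruling out hidden dependencies between the nine specific bubbles and the conforming $P_3^{\t{c}}$ basis on a single tetrahedron. The argument depends essentially on the explicit choice \eqref{p-2-r} and on the pairwise linear independence of the nine divergences in \eqref{div-b}, which is the only place where the concrete construction of $\hb b$ really matters and the only step requiring more than a routine broken-Sobolev argument.
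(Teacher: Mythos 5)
Your first step (showing $(\nabla\b v_h,\nabla\b v_h)=0$ forces $\b v_h\equiv\b 0$ via constancy on each $T$, the $P_2$-moment continuity \eqref{4F} tested against $p_2\equiv 1$ on interior faces, and the boundary faces) is correct and coincides with the paper's opening argument; your reading of the lemma as ``broken inner product'' plus ``linear independence of the listed functions'' is also the right one.

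The gap is in the linear-independence half. You let the assertion that a nonzero trace of $\b v_c=\sum_j c_j\phi_j$ on $T$ ``cannot have vanishing $P_2$-face moments on all four faces consistently with global continuity'' carry the entire separation of $\t{span}\{\phi_j\}$ from $\t{span}\{\b b_i\}$. Locally this assertion is false: there is a nonzero scalar $P_3$ polynomial on $T$ whose $P_2$ moments vanish on all four faces (the interior $P_3^{\t{nc}}$ bubble; the paper explicitly notes that the three internal $P_3^{\t{nc}}$ vector bubbles lie in the span of the nine $\b b_i$). Hence on a single interior tetrahedron the identity $\b v_c|_T=-\sum_i d_i\b b_i$ admits nontrivial solutions, and the qualifier ``consistently with global continuity'' is doing all the work with no argument behind it --- it is precisely the statement to be proved. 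The paper closes this by marching from the boundary: on a tetrahedron with a face $F_0\subset\partial\Omega$, the constraint $\b v_c|_{\partial\Omega}=\b 0$ gives pointwise vanishing of $\b v_b=-\b v_c$ on $F_0$, so $\b v_b=\lambda_0\,(p_{2,1},p_{2,2},p_{2,3})^T+\lambda_0\lambda_1\lambda_2\lambda_3\,(a_1,a_2,a_3)^T$ as in \eqref{b-2-3}; then \eqref{4F} kills the $p_{2,i}$, the $P_2$-divergence constraint \eqref{P-2} kills the $a_i$, the linear independence of the nine divergences \eqref{div-b} gives $d_i=0$, unisolvence gives $c_j=0$ on that $T$, and the now-established vanishing of $\b v_c$ on the shared face lets one repeat the argument on each neighboring tetrahedron. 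Your proposal invokes neither the boundary condition nor \eqref{P-2} in this step, so the separation is not actually established.
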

\begin{proof} We need to show all coefficients of $\b v_h$ in \eqref{V-h} are zero, $c_j=d_i=0$,
     if $(\nabla \b v_h, \nabla \b v_h)=0$.
By $\nabla \b v_h=0$, $\b v_h$ is a constant vector  on each tetrahedron $T$.
Because $\b v_h$ is $P_2$-moment continuous on the face-triangle between two tetrahedra,
   $\b v_h$ is a global constant vector and $\b v_h=\b 0$ by the boundary condition.
   
We start from a boundary $T$ with a face-triangle $F_0\subset \partial \Omega$.
Because \an{\label{v-b-c} \b v_h=\b v_c+\b v_b =\b v_c+   \sum_{i=1}^9 d_i \b b_i = \b 0
   \quad \t{ on } \ T }
   and $\b v_c|_{F_4}=\b 0$,  we get 
\an{\label{v-b-4} \b v_b=\b 0 \quad \t{ on }  \ F_0.  }
As $\b v_h=\b 0$ on all four faces of $T$, $\b v_b \in [P_3(F_i)]^3$ on the four faces and
\an{\label{b-2-3} \b v_b = \lambda_0 \p{ p_{2,1} \\ p_{2,2} \\ p_{2,3} }
            + \lambda_0\lambda_1\lambda_2\lambda_3\p{a_1\\a_2\\a_3},  }
where $\lambda_i$ is the barycentric coordinates of $T$, i.e., a linear polynomial 
   $\lambda_i=0$ on $F_i$ and $\lambda_i=1$ at the opposite vertex, 
     $ p_{2,i}\in P_2(T)$,  and $\{ a_i \}$ are three constants.
By the $P_2$-moment face continuity \eqref{4F}, 
   $\b v_b$ has all $P_2$ face-moments vanishing which implies $p_{2,i}=0$ in \eqref{b-2-3}.
By the $P_2$ divergence condition \eqref{P-2}, $a_i=0$ in \eqref{b-2-3}.   Thus we have
\an{ \label{v-b} \b v_b= \sum_{i=1}^9 d_i \b b_i =\b 0 \quad \t{ on } \ T.  }
Because $\t{div}\b v_b=0$, by \eqref{div-b}, i.e., the 9 divergences are linearly independent $P_2$ 
   polynomials, we have $d_i=0$ in \eqref{v-b-c}.
By \eqref{v-b-c} and \eqref{v-b}, $\b v_c=0$.  Because $\b v_c$ is a linear combination of
  Lagrange nodal basis functions,  all coefficients $c_j=0$ in \eqref{V-h}. 
  
  Repeating the above steps on the next $T$ sharing a face-triangle with the last tetrahedron or having
     a face-triangle on $\partial \Omega$,   we would get that all coefficients of $\b v_h$ are $0$ there.
The lemma is proved.
\end{proof}

\section{The stability and convergence}

We prove the inf-sup condition next.

\begin{lemma} There is a positive constant $C$ independent of $h$, such that
\an{\label{inf} \inf_{p_h \in P_h} \sup_{\b v_h\in \b V_h}
     \frac { (\t{div} \b v_h, p_h) }{ \|\b v_h\|_{1,h} \|p_h\|_0 } \ge C,
} where $P_h$ and $\b V_h$ are defined in \eqref{P-h} and \eqref{V-h}, respectively,
   and $\|\b v_h\|_{1,h}^2 = \|\b v_h\|_{0}^2 + \|\nabla \b v_h\|_{0}^2$.
\end{lemma}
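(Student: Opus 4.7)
The plan is to verify Fortin's criterion: construct a linear operator $\Pi_h\colon H_0^1(\Omega)^3\to \b V_h$ such that $(\t{div}(\b v-\Pi_h \b v),q_h)=0$ for all $q_h\in P_h$ and $\|\Pi_h \b v\|_{1,h}\le C\|\b v\|_1$. Once such a $\Pi_h$ is in hand, \eqref{inf} is a standard consequence of the continuous Stokes inf-sup on $H_0^1(\Omega)^3\times L_0^2(\Omega)$. Locally $P_h|_T=P_2(T)$ splits into the piecewise constants and the nine-dimensional mean-zero subspace $Q_T:=\{q\in P_2(T):\int_T q\,\t dx=0\}$, and $\b V_h$ splits analogously as the conforming $P_3^{\t c}$-part plus the per-element $9$-dimensional bubble span. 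This parallel decomposition dictates a two-stage construction of $\Pi_h$.

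Stage 1 (conforming backbone). Take a Scott--Zhang type quasi-interpolant $\Pi_h^{(1)}\colon H_0^1(\Omega)^3\to P_3^{\t c}\cap H_0^1(\Omega)^3$, adjusted so that $\int_F(\b v-\Pi_h^{(1)}\b v)\cdot\b n\,\t dF=0$ on every face $F\in\mathcal F_h$. The adjustment is feasible because $P_3^{\t c}$ carries a face-interior degree of freedom on every face whose normal component can be tuned, face by face, to pin down that single scalar via a local dual basis, preserving the standard Scott--Zhang $H^1$-stability. Summing the identity over the four faces of any $T$ and applying the divergence theorem gives $\int_T\t{div}(\b v-\Pi_h^{(1)}\b v)\,\t dx=0$, so the Fortin orthogonality already holds against the piecewise-constant part of $P_h$.

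Stage 2 (local bubble correction). On each $T$ define $\Pi_h\b v|_T:=\Pi_h^{(1)}\b v|_T+\sum_{i=1}^9 d_i^T \b b_i^T$, where the nine coefficients solve
\a{ \sum_{i=1}^9 d_i^T(\t{div}\b b_i^T,q)_T=(\t{div}(\b v-\Pi_h^{(1)}\b v),q)_T\quad\forall q\in Q_T. }
By \eqref{0-b} each $\t{div}\b b_i^T$ lies in $Q_T$, and the proof of Lemma \ref{p-d} already recorded that $\t{div}\b b_1^T,\dots,\t{div}\b b_9^T$ are linearly independent; hence this $9\times 9$ Gram system is invertible and uniquely solvable. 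Because each $\b b_i^T$ is supported in $T$ with vanishing $P_2$ face moments \eqref{4F}, the correction keeps $\Pi_h\b v\in\b V_h$, and combined with Stage 1 produces the Fortin orthogonality against every $q_h\in P_h$.

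For the $H^1$-bound, Stage 1 is the standard Scott--Zhang estimate. For Stage 2, a scaling to the reference tetrahedron (where the Gram matrix of $\{\t{div}\hb b_i\}$ is fixed and nonsingular) together with the Piola-type formula \eqref{b-i} gives $|d_i^T|\,\|\b b_i^T\|_{1,T}\le C\|\t{div}(\b v-\Pi_h^{(1)}\b v)\|_{0,T}\le C|\b v|_{1,\omega_T}$, with $\omega_T$ the local Scott--Zhang patch; summing over $T$ closes the $\|\cdot\|_{1,h}$-bound. The real obstacle is Stage 1: arranging an $H^1$-stable conforming interpolant that simultaneously reproduces every face normal-flux average on a fully general tetrahedral mesh. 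The bubble step is automatic once the linear independence of the nine divergences displayed in \eqref{div-b} is available, which is exactly the fact extracted inside Lemma \ref{p-d}.
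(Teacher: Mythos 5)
Your proposal is correct and follows essentially the same route as the paper: a Scott--Zhang conforming interpolant, a face-average correction using the mid-face $P_3$ Lagrange node, and a per-element bubble correction exploiting the fact that the nine divergences in \eqref{div-b} span the nine-dimensional mean-zero $P_2$ space on each tetrahedron. The only cosmetic differences are that the paper builds the test function directly for the given $p_h$ (choosing $\b v$ with $\t{div}\,\b v=p_h$ so the bubble step reproduces the divergence exactly on each element, rather than only weakly) and corrects all three components of the face average rather than just the normal one; the Stage 1 step you flag as the real obstacle is handled routinely by the scaling estimate for the coefficients \eqref{F-int-c}.
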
 

\begin{proof} Given a $p_h\in P_h\subset L^2_0(\Omega)$, there is a smooth function
   $\b v\in (H^1_0(\Omega))^3$ such that
\an{\label{0-s}   \| \b v\|_1 \le C \|p_h\|_0 \quad \t{and}  \quad \t{div} \b v=p_h.  } 
Let $  {\b v}_{h,1}$ be the Scott-Zhang \cq\ interpolation of $\b v$, cf.~\cite{Scott-Zhang},
\a{ {\b v}_{h,1} = \sum_{i=1}^{n_c}  c_j \phi_j. }  
  mid-edge point for the basis function $\psi_i$.
The interpolation is stable \cite{Scott-Zhang} that
\an{\label{1-s} \| {\b v}_{h,1} \|_{1,h} \le C  \|  {\b v}  \|_{1}. }
We next correct the mid-face nodal values of conforming $P_3$ function by  letting
\a{ \b v_{h,2} = \b v_{h,1} + \sum_{F\in \mathcal{F}_h^0} \phi_{F} \p{ a_{F,1}
     \\ a_{F,2} \\ a_{F,3} }, } 
where $\phi_F=3^3\lambda_0\lambda_1\lambda_2$ 
     is the $P^3$ Lagrange basis at the mid-face node of $F$, 
and, noticing that $\int_{F} \phi_F\; \t d F=(9/20)|F|$, 
\an{\label{F-int-c} a_{F,i} =\frac{\int_{F } (\b v-  \b v_{h,1})_i  \; \t d F }
                        { (9/20) \int_{F_j} 1 \; \t d F}, \ i=1,\dots,3. } 
Then, by \eqref{1-s}
\an{\label{2-s}  \| {\b v}_{h,2} \|_{1,h} \le C ( \| \b v_{h,1} \|_{1,h}
                   +  \|  {\b v}- \b v_{h,1}  \|_{1,h})
        \le C \|  {\b v}  \|_{1}. }

On a tetrahedron $T\in\mathcal{T}_h$,  let 
\an{\label{p-1} p_{h,1}&=\t{div}(\b v-\b v_{h,2}) = p_h  -\t{div} \b v_{h,2} \in P_2(T). }
By integration by parts and \eqref{F-int-c}, we have
\an{\label{F-int}
    \int_{T} p_{h,1} d\b x &= \int_{\partial T} (\b v-\b v_{h,2})\cdot \b n_T \; \t d F =0, }
where $\b n_T$ is the unit outward normal vector on each face of $T$.
By \eqref{F-int}, the $P_2$ polynomial $p_{h,1}$ is a linear combination of 9 
   mean-zero functions in \eqref{div-b},
\an{\label{p-1i}
    p_{h,1} = \sum_{i=1}^9  a_{T,i} \t{div}\b b_i \quad \t{ on } \ T  }
  for some constant coefficients $\{ a_{T,i} \}$.
  
Let 
\an{\label{v-h-3}
     \b v_{h,3} = \sum_{T\in \mathcal{T}_h }  \sum_{i=1}^9  a_{T,i}  \b b_i \in \b V_h .}
By \eqref{p-1i}, \eqref{p-1} and \eqref{2-s},  
\an{\label{3-s} \| {\b v}_{h,3} \|_{1,h} \le C  \| p_{h,1} \|_{0}
              \le C    \| p_{h } \|_{0}   +   C \|  {\b v}  \|_{1}. }
Together, we construct a
\a{  \b v_{h} &= \b v_{h,2} + \b v_{h,3} }
such that, by \eqref{p-1}, \eqref{v-h-3}, \eqref{2-s},  \eqref{3-s} and   \eqref{0-s}, 
\a{ \t{div}\b v_h &= p_h - p_{h,1} + p_{h,1}=p_h,   \\
     \|\b v_h\|_{1,h} &\le \|\b v_{h,2}\|_{1,h}+ \| \b v_{h,3}\|_{1,h} \le C
          \|\b v \|_{1 } +C\|p_h\|_0+ C\|\b v \|_{1 }\\ &\le   C
           \|p_h\|_0.  }
The lemma is proved.
\end{proof}

\begin{lemma}The linear system of finite element equations \eqref{f-e-1}--\eqref{f-e-2}
   has a unique solution
     $(\b u_h,p_h)\in \b V_h \times P_h$, where $\b V_h$ and $P_h$ 
      are defined in \eqref{V-h} and \eqref{P-h}, respectively.
\end{lemma}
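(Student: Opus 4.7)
The plan is to reduce the existence-and-uniqueness statement to a uniqueness statement, since the discrete problem \eqref{f-e-1}--\eqref{f-e-2} is a square linear system in the finite-dimensional space $\b V_h \times P_h$. For a square system, it suffices to show that the only solution to the homogeneous problem (with $\b f = 0$) is $(\b u_h, p_h) = (\b 0, 0)$.

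To establish uniqueness, I would set $\b f = \b 0$ and test \eqref{f-e-1} with $\b v_h = \b u_h$. Then using \eqref{f-e-2} with $q_h = p_h$ eliminates the pressure term, leaving $(\nabla \b u_h, \nabla \b u_h) = 0$. By Lemma \ref{p-d}, the bilinear form $(\nabla \cdot, \nabla \cdot)$ is positive definite on $\b V_h$, which immediately forces $\b u_h = \b 0$.

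With $\b u_h = \b 0$ and $\b f = \b 0$, equation \eqref{f-e-1} reduces to $(\t{div}\, \b v_h, p_h) = 0$ for every $\b v_h \in \b V_h$. Applying the inf-sup condition \eqref{inf} from the previous lemma then yields
\[
   C \|p_h\|_0 \le \sup_{\b v_h \in \b V_h} \frac{(\t{div}\, \b v_h, p_h)}{\|\b v_h\|_{1,h}} = 0,
\]
so $p_h = 0$. Hence the homogeneous system has only the trivial solution, and the square linear system has a unique solution for every right-hand side.

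There is no substantive obstacle here: all the real work has already been done in Lemma \ref{p-d} and the preceding inf-sup lemma. The only point to be careful about is to record explicitly that the system is square (so existence follows from uniqueness), and to invoke the coercivity on $\b V_h$ rather than a Poincar\'e-type inequality, since $\|\cdot\|_{1,h}$ includes the $L^2$ part and the nonconforming setting requires the discrete version of the Poincar\'e-Friedrichs inequality — however, on a finite-dimensional space the implication $\nabla \b u_h = 0 \Rightarrow \b u_h = \b 0$ from Lemma \ref{p-d} is enough for uniqueness and existence, avoiding any need to invoke the norm equivalence at this stage.
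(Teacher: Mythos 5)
Your proposal is correct and follows essentially the same route as the paper's own proof: reduce to uniqueness for the square homogeneous system, use the energy argument plus Lemma \ref{p-d} to get $\b u_h=\b 0$, and then invoke the inf-sup condition \eqref{inf} to conclude $p_h=0$. No differences worth noting.
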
  

\begin{proof} For a square system of finite equations,  we only need to prove the uniqueness.
  Let $\b f=\b 0$ in \eqref{f-e-1}.
Letting $\b v_h=\b u_h$ in \eqref{f-e-1} and $q_h=p_h$ in \eqref{f-e-2}, 
      we add the two equations \eqref{f-e-1}    and \eqref{f-e-2}
   to get
\a{  \|\nabla \b u_h \|_0 = 0.  } 
Thus $\b u_h$ is a constant vector on each $T$.  Because $\b u_h\in\ncb$ and it has a
    zero boundary
   condition, by Theorem \ref{p-d}, $\b u_h=0$.   
  By the inf-sup condition \eqref{inf} and \eqref{f-e-1}, we have
\a{ \|p_h\|_0 & \le \frac 1C \sup_{\b v_h\in \b V_h} 
             \frac { (\t{div} \b v_h, p_h) }{ \|\b v_h\|_{1,h} } \\
              & = \frac 1C \sup_{\b v_h\in \b V_h} 
             \frac { (\nabla\b u_h, \nabla \b v_h) }{ \|\b v_h\|_{1,h} } \\
              & = \frac 1C \sup_{\b v_h\in \b V_h} 
             \frac { 0 }{ \|\b v_h\|_{1,h} } =0. } 
The lemma is proved.
\end{proof}
         
\def\bi#1{\langle #1 \rangle_{\mathcal{F}_h}}  

\begin{theorem}
Let $(\b{u},p)\in (H^4(\Omega) \cap H^1_0 (\Omega))^3 \times (H^3 (\Omega)\cap L^2_0 (\Omega))$
    be the solution of the stationary Stokes problem \eqref{e1-1}--\eqref{e1-3}. 
Let $(\b{u}_h,p_h)\in \b V_h \times P_h $
    be the solution of the finite element problem   \eqref{f-e-1}--\eqref{f-e-2}.
It holds that   
\an{ \label{h1} 
   \|\b{u}-\b{u}_{h}\|_{1,h} +\|p-p_{h}\|_{0} & \le 
      C h^3 ( |\b{u} |_{4}+|p|_{3} ). } 
\end{theorem}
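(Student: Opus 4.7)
\medskip

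\noindent\textbf{Proof plan.} The plan is to combine the discrete inf-sup condition from Lemma~3.1 with a standard Strang-type abstract error estimate for nonconforming mixed methods, and then bound the resulting approximation and consistency terms using the special orthogonality properties built into the space $\b V_h$.

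\medskip

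First, I would record the abstract error bound. Since $a_h(\b u,\b v)=\sum_{T}\int_T\nabla\b u{:}\nabla\b v\,d\b x$ is coercive on $\b V_h$ by Lemma~2.1, and $(\b V_h,P_h)$ is inf-sup stable by Lemma~3.1, a standard argument (Strang's second lemma for mixed methods) yields
\an{\label{plan-abs} \|\b u-\b u_h\|_{1,h}+\|p-p_h\|_0 \le C\Big(\inf_{\b v_h\in\b V_h}\|\b u-\b v_h\|_{1,h}+\inf_{q_h\in P_h}\|p-q_h\|_0+\sup_{\b w_h\in\b V_h}\frac{|E_h(\b u,p;\b w_h)|}{\|\b w_h\|_{1,h}}\Big),}
where $E_h(\b u,p;\b w_h):=a_h(\b u,\b w_h)-(p,\t{div}\,\b w_h)-(\b f,\b w_h)$ is the consistency error.

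\medskip

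Second, I would handle the approximation terms. Because $\b V_h$ contains the conforming space $(P_3^{\t c}\cap H_0^1(\Omega))^3$, a Scott--Zhang interpolation $I_h\b u$ satisfies $\|\b u-I_h\b u\|_{1,h}\le Ch^3|\b u|_4$. Because $P_h$ is the full discontinuous $P_2$ space, the elementwise $L^2$-projection $Q_h p$ satisfies $\|p-Q_hp\|_0\le Ch^3|p|_3$. These are routine and contribute $Ch^3(|\b u|_4+|p|_3)$ to \eqref{plan-abs}.

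\medskip

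Third, and this is the main point, I would estimate the consistency error. Using $-\Delta\b u+\nabla p=\b f$ and elementwise integration by parts,
\a{ E_h(\b u,p;\b w_h)=\sum_{F\in\mathcal{F}_h}\int_{F}\big((\nabla\b u\cdot\b n_F)-p\b n_F\big)\cdot[\b w_h]\,\t dF,}
where $[\b w_h]$ denotes the jump across interior faces and the trace on boundary faces. The key observation is that every $\b w_h\in\b V_h$ has a jump on each face $F$ that is $L^2(F)$-orthogonal to $P_2(F)^3$: the conforming component contributes no jump, the three internal $P_3^{\t{nc}}$ bubbles (contained among the $\b b_i$) have vanishing $P_2$ face moments by construction, and the six added $\b b_i$ do so by \eqref{4F} (the Piola transform preserves this orthogonality). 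Consequently, for any $P_2$-projections $\pi_F^2$ on faces,
\a{ E_h(\b u,p;\b w_h)=\sum_{F}\int_{F}\Big(\nabla\b u\cdot\b n_F-\pi_F^2(\nabla\b u\cdot\b n_F)-(p-\pi_F^2 p)\b n_F\Big)\cdot[\b w_h]\,\t dF.}
Applying Bramble--Hilbert on each face, the trace inequality, and Cauchy--Schwarz exactly as in the classical nonconforming analysis (e.g., Crouzeix--Raviart) gives $|E_h(\b u,p;\b w_h)|\le Ch^3(|\b u|_4+|p|_3)\|\b w_h\|_{1,h}$.

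\medskip

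\noindent\textbf{Where the difficulty lies.} The two routine pieces are the approximation estimates. The nontrivial ingredient is verifying that \emph{every} function in $\b V_h$ - not just the conforming part, and not just the low-order $P_3^{\t{nc}}$ bubbles - has its face jump orthogonal to $P_2(F)^3$, which is what allows insertion of the $P_2$ face projections $\pi_F^2$ and produces the order $h^3$ consistency bound. This is exactly what was designed into the $\b b_i$ through \eqref{4F}, so the argument closes; without that face-orthogonality we would only obtain an $O(h^2)$ consistency error and lose quasi-optimality.
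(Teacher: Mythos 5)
Your proposal is correct and follows essentially the same route the paper intends: the paper does not write this proof out but instead cites the analogous Theorem 4.1 of the $P_2^{\t{nc}}$--$P_1^{\t{dis}}$ paper \cite{Zhang-P2nc}, which is exactly the standard Strang-type argument you outline (discrete inf-sup plus best approximation plus consistency, with the consistency term controlled by the $P_2$ face-moment orthogonality \eqref{4F} of every jump in $\b V_h$). The one step worth making explicit is that bounding $\|[\b w_h]\|_{0,F}$ by $Ch^{1/2}\|\b w_h\|_{1,h}$ requires exploiting the vanishing face mean of the jump (a consequence of the same orthogonality), which your appeal to the classical Crouzeix--Raviart analysis does cover.
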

\begin{proof}  The proof is standard but very long as there are many
   inconsistency face-integrals arising
   from the nonconforming velocity.  We cite the proof of Theorem 4.1 in \cite{Zhang-P2nc} only,
     where the finite element is $P_2^{\t nc}$-$P_1^{\t{dis}}$.
\end{proof}

\begin{theorem}
Let $(\b{u},p)\in (H^4(\Omega) \cap H^1_0 (\Omega))^d \times (H^4 (\Omega)\cap L^2_0 (\Omega))$
    be the solution of the stationary Stokes problem \eqref{e1-1}--\eqref{e1-3}. 
Let $(\b{u}_h,p_h)\in \b V_h \times P_h $
    be the solution of the finite element problem   \eqref{f-e-1}--\eqref{f-e-2}.
It holds that   
\an{ \label{l2}
   \|\b u -\b u_{h}\|_{0}  & \le  C h^4 ( |\b{u} |_{3}+|p|_{3} ) . } 
\end{theorem}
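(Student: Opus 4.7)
The plan is to apply an Aubin--Nitsche duality argument adapted to the nonconforming setting. I introduce the dual Stokes problem: find $(\b w,r)\in (H_0^1(\Omega))^3\times L_0^2(\Omega)$ such that $-\Delta\b w+\nabla r=\b u-\b u_h$ and $\t{div}\,\b w=0$ in $\Omega$, and assume the standard $H^2$-regularity
$$\|\b w\|_2+\|r\|_1\le C\|\b u-\b u_h\|_0.$$
Testing against $\b u-\b u_h$ and integrating by parts element by element gives
$$\|\b u-\b u_h\|_0^2 = a_h(\b u-\b u_h,\b w) - (\t{div}_h(\b u-\b u_h),r) + E_1(\b u-\b u_h;\b w,r),$$
where $a_h(\cdot,\cdot)=\sum_T(\nabla\cdot,\nabla\cdot)_T$ and $E_1$ collects the inter-element face jumps involving $\partial_n\b w$ and $r\b n$ that appear because $\b u_h\notin H^1(\Omega)$.

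Next I would introduce an interpolant $\b w_I\in\b V_h$ of $\b w$, built exactly as in the proof of Lemma 3.1 (Scott--Zhang conforming piece, mid-face correction, then bubble correction enforcing $\t{div}_h\b w_I=\Pi_h r$), and the $L^2$-projection $r_I\in P_h$ of $r$. Testing the Galerkin equations \eqref{f-e-1}--\eqref{f-e-2} against $(\b w_I,r_I)$ and combining with the above identity produces
$$\|\b u-\b u_h\|_0^2 = \bigl[a_h(\b u-\b u_h,\b w-\b w_I)-(\t{div}_h(\b u-\b u_h),r-r_I)\bigr]+E_1+E_2,$$
with $E_2$ an analogous consistency defect from the primal discretization. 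The bracketed term is controlled by the standard interpolation estimates $\|\b w-\b w_I\|_{1,h}\le Ch|\b w|_2$ and $\|r-r_I\|_0\le Ch|r|_1$, combined with the $H^1$ error bound of Theorem 3.1, giving a contribution bounded by $Ch^4(|\b u|_4+|p|_3)\|\b u-\b u_h\|_0$.

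The main obstacle is controlling the consistency errors $E_1$ and $E_2$. Each decomposes as $\sum_{F\in\mathcal{F}_h}\int_F[\b v_h]\cdot g\,\t dF$, where $\b v_h$ is $\b u_h$ or $\b w_I$ and $g$ is a normal trace of $\b w,r,\b u_h$, or $p_h$. The structural point driving the proof is that every $\b v_h\in\b V_h$ is $P_2$-moment continuous across each face: the $P_3^{\t{c}}$ piece is single-valued, and the added $P_3^{\t{nc}}$ and $P_4^{\t{nc}}$ bubbles have vanishing $P_2$-moments on each face by \eqref{4F}, so $[\b v_h]\perp P_2(F)$. Subtracting the $L^2(F)$-projection of $g$ onto $P_2(F)$ therefore costs nothing, and a Bramble--Hilbert/trace argument delivers an $O(h^3)$-scale residual on the $g$-side. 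Pairing with $\|\b u-\b u_h\|_{1,h}\le Ch^3$ in $E_1$ and with $\|\b w-\b w_I\|_{1,h}\le Ch|\b w|_2$ in $E_2$ then yields the desired $O(h^4)$ bound. Dividing through by $\|\b u-\b u_h\|_0$ and invoking the dual regularity closes the argument. The detailed execution in the analogous $P_2^{\t{nc}}$-$P_1^{\t{dis}}$ setting of \cite{Zhang-P2nc} transfers essentially verbatim, with $k=3$ replacing $k=2$ throughout.
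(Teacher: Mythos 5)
Your proposal is correct and follows essentially the same route as the paper: the paper's proof is a one-line citation of Theorem 4.2 in \cite{Zhang-P2nc}, which is precisely the Aubin--Nitsche duality argument you outline (dual Stokes problem with $H^2$ regularity, interpolation of the dual solution into $\b V_h\times P_h$, and consistency terms controlled by the $P_2$-moment face continuity of $\b V_h$). The only remark is that your right-hand side $|\b u|_4+|p|_3$ is the natural one for an $O(h^4)$ bound, whereas the theorem as printed states $|\b u|_3+|p|_3$, which appears to be a typographical slip in the paper rather than a defect in your argument.
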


 \def\iuh{\b u-\b u_h}

\begin{proof}
The proof of Theorem 4.2 in \cite{Zhang-P2nc} can be used here with $P_2^{\t{nc}}$ replaced by $P_3^{\t{nc}}$.
\end{proof}

\section{Numerical computation}\label{s-numerical}

We solve the following 3D Stokes problem on a unit-cube domain $\Omega=(0,1)^3$: 
    Find $\b u \in \b V=H^1_0(\Omega)^3$ and  $p \in P=L_0^2(\Omega)$  such that  
\an{ \ad{   (\nabla\b u, \nabla\b v )- (\d \b v ,p )
     &=\b f 
             && \forall \b v  \in \b V , \\
        (\d \b u, q)  &=0 && \forall q  \in P,} \label{s4} }
where $\b f$ is chosen so that the exact solution is
\an{ \ad{   \b u&=\p{-g_z\\  g_z \\ g_{x}-g_y }  \quad \t{and} \quad
                   p=100 \sin(2\pi x), } 
   \label{s3} }    where 
\a{ g&=2^{9} x^2(1-x)^2y^2(1-y)^2z^2(1-z)^2. }

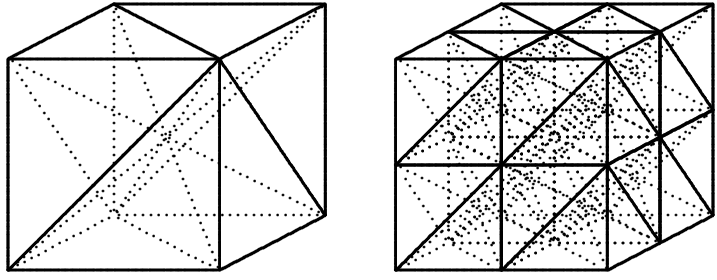
\begin{figure}[ht]\begin{center}\begin{picture}(250,130)(0,0)

\put(0,0){ \begin{picture}(  112.,  100.800003)( -32., -20.7999992)
     \def\lb{\circle*{0.8}}\def\lc{\vrule width1.2pt height1.2pt}
     \def\la{\circle*{0.3}}
  
     \multiput(  -1.77,  -0.92)(  -2.662,  -1.384){ 14}{\la}
     \multiput(  80.00,   0.00)(  -0.222,  -0.115){180}{\la}
     \multiput(   0.00,  80.00)(  -0.222,  -0.115){180}{\la}
     \multiput(  80.00,  80.00)(  -0.222,  -0.115){180}{\la}
     \multiput(   2.00,   0.00)(   3.000,   0.000){ 26}{\la}
     \multiput( -40.00, -20.80)(   0.250,   0.000){320}{\la}
     \multiput(   0.00,  80.00)(   0.250,   0.000){320}{\la}
     \multiput( -40.00,  59.20)(   0.250,   0.000){320}{\la}
     \multiput(   0.00,   2.00)(   0.000,   3.000){ 26}{\la}
     \multiput( -40.00, -20.80)(   0.000,   0.250){320}{\la}
     \multiput(  80.00,   0.00)(   0.000,   0.250){320}{\la}
     \multiput(  40.00, -20.80)(   0.000,   0.250){320}{\la}
     \multiput(   1.41,   1.41)(   2.121,   2.121){ 36}{\la}
     \multiput( -40.00, -20.80)(   0.177,   0.177){452}{\la}
     \multiput(  -1.12,   1.66)(  -1.680,   2.486){ 22}{\la}
     \multiput(  80.00,   0.00)(  -0.140,   0.207){285}{\la}
     \multiput(   1.77,  -0.92)(   2.662,  -1.384){ 14}{\la}
     \multiput(   0.00,  80.00)(   0.222,  -0.115){180}{\la}
     \multiput(   1.12,   1.66)(   1.680,   2.486){ 22}{\la}
     \multiput( -38.47, -19.51)(   2.297,   1.930){ 52}{\la}
     \multiput(   0.74,  78.14)(   1.107,  -2.788){ 36}{\la}
     \multiput( -38.21,  58.32)(   2.690,  -1.327){ 44}{\la} \end{picture}}
 
   \put(150,0){\begin{picture}(  112.,  100.800003)( -32., -20.7999992)
     \def\lb{\circle*{0.8}}\def\lc{\vrule width1.2pt height1.2pt}
     \def\la{\circle*{0.3}}
     \multiput(  -1.77,  -0.92)(  -2.662,  -1.384){  6}{\la}
     \multiput(  38.23,  -0.92)(  -2.662,  -1.384){  6}{\la}
     \multiput(  -1.77,  39.08)(  -2.662,  -1.384){  6}{\la}
     \multiput(  38.23,  39.08)(  -2.662,  -1.384){  6}{\la}
     \multiput(   2.00,   0.00)(   3.000,   0.000){ 12}{\la}
     \multiput( -18.00, -10.40)(   3.000,   0.000){ 12}{\la}
     \multiput(   2.00,  40.00)(   3.000,   0.000){ 12}{\la}
     \multiput( -18.00,  29.60)(   3.000,   0.000){ 12}{\la}
     \multiput(   0.00,   2.00)(   0.000,   3.000){ 12}{\la}
     \multiput( -20.00,  -8.40)(   0.000,   3.000){ 12}{\la}
     \multiput(  40.00,   2.00)(   0.000,   3.000){ 12}{\la}
     \multiput(  20.00,  -8.40)(   0.000,   3.000){ 12}{\la}
     \multiput(   1.41,   1.41)(   2.121,   2.121){ 18}{\la}
     \multiput( -18.59,  -8.99)(   2.121,   2.121){ 18}{\la}
     \multiput(  -1.12,   1.66)(  -1.680,   2.486){ 10}{\la}
     \multiput(  38.88,   1.66)(  -1.680,   2.486){ 10}{\la}
     \multiput(   1.77,  -0.92)(   2.662,  -1.384){  6}{\la}
     \multiput(   1.77,  39.08)(   2.662,  -1.384){  6}{\la}
     \multiput(   1.12,   1.66)(   1.680,   2.486){ 10}{\la}
     \multiput( -18.47,  -9.11)(   2.297,   1.930){ 26}{\la}
     \multiput(   0.74,  38.14)(   1.107,  -2.788){ 18}{\la}
     \multiput( -18.21,  28.72)(   2.690,  -1.327){ 22}{\la}
     \multiput( -21.77, -11.32)(  -2.662,  -1.384){  6}{\la}
     \multiput(  18.23, -11.32)(  -2.662,  -1.384){  6}{\la}
     \multiput( -21.77,  28.68)(  -2.662,  -1.384){  6}{\la}
     \multiput(  18.23,  28.68)(  -2.662,  -1.384){  6}{\la}
     \multiput( -18.00, -10.40)(   3.000,   0.000){ 12}{\la}
     \multiput( -40.00, -20.80)(   0.250,   0.000){160}{\la}
     \multiput( -18.00,  29.60)(   3.000,   0.000){ 12}{\la}
     \multiput( -40.00,  19.20)(   0.250,   0.000){160}{\la}
     \multiput( -20.00,  -8.40)(   0.000,   3.000){ 12}{\la}
     \multiput( -40.00, -20.80)(   0.000,   0.250){160}{\la}
     \multiput(  20.00,  -8.40)(   0.000,   3.000){ 12}{\la}
     \multiput(   0.00, -20.80)(   0.000,   0.250){160}{\la}
     \multiput( -18.59,  -8.99)(   2.121,   2.121){ 18}{\la}
     \multiput( -40.00, -20.80)(   0.177,   0.177){226}{\la}
     \multiput( -21.12,  -8.74)(  -1.680,   2.486){ 10}{\la}
     \multiput(  18.88,  -8.74)(  -1.680,   2.486){ 10}{\la}
     \multiput( -18.23, -11.32)(   2.662,  -1.384){  6}{\la}
     \multiput( -18.23,  28.68)(   2.662,  -1.384){  6}{\la}
     \multiput( -18.88,  -8.74)(   1.680,   2.486){ 10}{\la}
     \multiput( -38.47, -19.51)(   2.297,   1.930){ 26}{\la}
     \multiput( -19.26,  27.74)(   1.107,  -2.788){ 18}{\la}
     \multiput( -38.21,  18.32)(   2.690,  -1.327){ 22}{\la}
     \multiput(  18.23, -11.32)(  -2.662,  -1.384){  6}{\la}
     \multiput(  60.00, -10.40)(  -0.222,  -0.115){ 90}{\la}
     \multiput(  18.23,  28.68)(  -2.662,  -1.384){  6}{\la}
     \multiput(  60.00,  29.60)(  -0.222,  -0.115){ 90}{\la}
     \multiput(  22.00, -10.40)(   3.000,   0.000){ 12}{\la}
     \multiput(   0.00, -20.80)(   0.250,   0.000){160}{\la}
     \multiput(  22.00,  29.60)(   3.000,   0.000){ 12}{\la}
     \multiput(   0.00,  19.20)(   0.250,   0.000){160}{\la}
     \multiput(  20.00,  -8.40)(   0.000,   3.000){ 12}{\la}
     \multiput(   0.00, -20.80)(   0.000,   0.250){160}{\la}
     \multiput(  60.00, -10.40)(   0.000,   0.250){160}{\la}
     \multiput(  40.00, -20.80)(   0.000,   0.250){160}{\la}
     \multiput(  21.41,  -8.99)(   2.121,   2.121){ 18}{\la}
     \multiput(   0.00, -20.80)(   0.177,   0.177){226}{\la}
     \multiput(  18.88,  -8.74)(  -1.680,   2.486){ 10}{\la}
     \multiput(  60.00, -10.40)(  -0.140,   0.207){142}{\la}
     \multiput(  21.77, -11.32)(   2.662,  -1.384){  6}{\la}
     \multiput(  21.77,  28.68)(   2.662,  -1.384){  6}{\la}
     \multiput(  21.12,  -8.74)(   1.680,   2.486){ 10}{\la}
     \multiput(   1.53, -19.51)(   2.297,   1.930){ 26}{\la}
     \multiput(  20.74,  27.74)(   1.107,  -2.788){ 18}{\la}
     \multiput(   1.79,  18.32)(   2.690,  -1.327){ 22}{\la}
     \multiput(  38.23,  -0.92)(  -2.662,  -1.384){  6}{\la}
     \multiput(  80.00,   0.00)(  -0.222,  -0.115){ 90}{\la}
     \multiput(  38.23,  39.08)(  -2.662,  -1.384){  6}{\la}
     \multiput(  80.00,  40.00)(  -0.222,  -0.115){ 90}{\la}
     \multiput(  42.00,   0.00)(   3.000,   0.000){ 12}{\la}
     \multiput(  22.00, -10.40)(   3.000,   0.000){ 12}{\la}
     \multiput(  42.00,  40.00)(   3.000,   0.000){ 12}{\la}
     \multiput(  22.00,  29.60)(   3.000,   0.000){ 12}{\la}
     \multiput(  40.00,   2.00)(   0.000,   3.000){ 12}{\la}
     \multiput(  20.00,  -8.40)(   0.000,   3.000){ 12}{\la}
     \multiput(  80.00,   0.00)(   0.000,   0.250){160}{\la}
     \multiput(  60.00, -10.40)(   0.000,   0.250){160}{\la}
     \multiput(  41.41,   1.41)(   2.121,   2.121){ 18}{\la}
     \multiput(  21.41,  -8.99)(   2.121,   2.121){ 18}{\la}
     \multiput(  38.88,   1.66)(  -1.680,   2.486){ 10}{\la}
     \multiput(  80.00,   0.00)(  -0.140,   0.207){142}{\la}
     \multiput(  41.77,  -0.92)(   2.662,  -1.384){  6}{\la}
     \multiput(  41.77,  39.08)(   2.662,  -1.384){  6}{\la}
     \multiput(  41.12,   1.66)(   1.680,   2.486){ 10}{\la}
     \multiput(  21.53,  -9.11)(   2.297,   1.930){ 26}{\la}
     \multiput(  40.74,  38.14)(   1.107,  -2.788){ 18}{\la}
     \multiput(  21.79,  28.72)(   2.690,  -1.327){ 22}{\la}
     \multiput(  -1.77,  39.08)(  -2.662,  -1.384){  6}{\la}
     \multiput(  38.23,  39.08)(  -2.662,  -1.384){  6}{\la}
     \multiput(   0.00,  80.00)(  -0.222,  -0.115){ 90}{\la}
     \multiput(  40.00,  80.00)(  -0.222,  -0.115){ 90}{\la}
     \multiput(   2.00,  40.00)(   3.000,   0.000){ 12}{\la}
     \multiput( -18.00,  29.60)(   3.000,   0.000){ 12}{\la}
     \multiput(   0.00,  80.00)(   0.250,   0.000){160}{\la}
     \multiput( -20.00,  69.60)(   0.250,   0.000){160}{\la}
     \multiput(   0.00,  42.00)(   0.000,   3.000){ 12}{\la}
     \multiput( -20.00,  31.60)(   0.000,   3.000){ 12}{\la}
     \multiput(  40.00,  42.00)(   0.000,   3.000){ 12}{\la}
     \multiput(  20.00,  31.60)(   0.000,   3.000){ 12}{\la}
     \multiput(   1.41,  41.41)(   2.121,   2.121){ 18}{\la}
     \multiput( -18.59,  31.01)(   2.121,   2.121){ 18}{\la}
     \multiput(  -1.12,  41.66)(  -1.680,   2.486){ 10}{\la}
     \multiput(  38.88,  41.66)(  -1.680,   2.486){ 10}{\la}
     \multiput(   1.77,  39.08)(   2.662,  -1.384){  6}{\la}
     \multiput(   0.00,  80.00)(   0.222,  -0.115){ 90}{\la}
     \multiput(   1.12,  41.66)(   1.680,   2.486){ 10}{\la}
     \multiput( -18.47,  30.89)(   2.297,   1.930){ 26}{\la}
     \multiput(   0.74,  78.14)(   1.107,  -2.788){ 18}{\la}
     \multiput( -18.21,  68.72)(   2.690,  -1.327){ 22}{\la}
     \multiput( -21.77,  28.68)(  -2.662,  -1.384){  6}{\la}
     \multiput(  18.23,  28.68)(  -2.662,  -1.384){  6}{\la}
     \multiput( -20.00,  69.60)(  -0.222,  -0.115){ 90}{\la}
     \multiput(  20.00,  69.60)(  -0.222,  -0.115){ 90}{\la}
     \multiput( -18.00,  29.60)(   3.000,   0.000){ 12}{\la}
     \multiput( -40.00,  19.20)(   0.250,   0.000){160}{\la}
     \multiput( -20.00,  69.60)(   0.250,   0.000){160}{\la}
     \multiput( -40.00,  59.20)(   0.250,   0.000){160}{\la}
     \multiput( -20.00,  31.60)(   0.000,   3.000){ 12}{\la}
     \multiput( -40.00,  19.20)(   0.000,   0.250){160}{\la}
     \multiput(  20.00,  31.60)(   0.000,   3.000){ 12}{\la}
     \multiput(   0.00,  19.20)(   0.000,   0.250){160}{\la}
     \multiput( -18.59,  31.01)(   2.121,   2.121){ 18}{\la}
     \multiput( -40.00,  19.20)(   0.177,   0.177){226}{\la}
     \multiput( -21.12,  31.26)(  -1.680,   2.486){ 10}{\la}
     \multiput(  18.88,  31.26)(  -1.680,   2.486){ 10}{\la}
     \multiput( -18.23,  28.68)(   2.662,  -1.384){  6}{\la}
     \multiput( -20.00,  69.60)(   0.222,  -0.115){ 90}{\la}
     \multiput( -18.88,  31.26)(   1.680,   2.486){ 10}{\la}
     \multiput( -38.47,  20.49)(   2.297,   1.930){ 26}{\la}
     \multiput( -19.26,  67.74)(   1.107,  -2.788){ 18}{\la}
     \multiput( -38.21,  58.32)(   2.690,  -1.327){ 22}{\la}
     \multiput(  18.23,  28.68)(  -2.662,  -1.384){  6}{\la}
     \multiput(  60.00,  29.60)(  -0.222,  -0.115){ 90}{\la}
     \multiput(  20.00,  69.60)(  -0.222,  -0.115){ 90}{\la}
     \multiput(  60.00,  69.60)(  -0.222,  -0.115){ 90}{\la}
     \multiput(  22.00,  29.60)(   3.000,   0.000){ 12}{\la}
     \multiput(   0.00,  19.20)(   0.250,   0.000){160}{\la}
     \multiput(  20.00,  69.60)(   0.250,   0.000){160}{\la}
     \multiput(   0.00,  59.20)(   0.250,   0.000){160}{\la}
     \multiput(  20.00,  31.60)(   0.000,   3.000){ 12}{\la}
     \multiput(   0.00,  19.20)(   0.000,   0.250){160}{\la}
     \multiput(  60.00,  29.60)(   0.000,   0.250){160}{\la}
     \multiput(  40.00,  19.20)(   0.000,   0.250){160}{\la}
     \multiput(  21.41,  31.01)(   2.121,   2.121){ 18}{\la}
     \multiput(   0.00,  19.20)(   0.177,   0.177){226}{\la}
     \multiput(  18.88,  31.26)(  -1.680,   2.486){ 10}{\la}
     \multiput(  60.00,  29.60)(  -0.140,   0.207){142}{\la}
     \multiput(  21.77,  28.68)(   2.662,  -1.384){  6}{\la}
     \multiput(  20.00,  69.60)(   0.222,  -0.115){ 90}{\la}
     \multiput(  21.12,  31.26)(   1.680,   2.486){ 10}{\la}
     \multiput(   1.53,  20.49)(   2.297,   1.930){ 26}{\la}
     \multiput(  20.74,  67.74)(   1.107,  -2.788){ 18}{\la}
     \multiput(   1.79,  58.32)(   2.690,  -1.327){ 22}{\la}
     \multiput(  38.23,  39.08)(  -2.662,  -1.384){  6}{\la}
     \multiput(  80.00,  40.00)(  -0.222,  -0.115){ 90}{\la}
     \multiput(  40.00,  80.00)(  -0.222,  -0.115){ 90}{\la}
     \multiput(  80.00,  80.00)(  -0.222,  -0.115){ 90}{\la}
     \multiput(  42.00,  40.00)(   3.000,   0.000){ 12}{\la}
     \multiput(  22.00,  29.60)(   3.000,   0.000){ 12}{\la}
     \multiput(  40.00,  80.00)(   0.250,   0.000){160}{\la}
     \multiput(  20.00,  69.60)(   0.250,   0.000){160}{\la}
     \multiput(  40.00,  42.00)(   0.000,   3.000){ 12}{\la}
     \multiput(  20.00,  31.60)(   0.000,   3.000){ 12}{\la}
     \multiput(  80.00,  40.00)(   0.000,   0.250){160}{\la}
     \multiput(  60.00,  29.60)(   0.000,   0.250){160}{\la}
     \multiput(  41.41,  41.41)(   2.121,   2.121){ 18}{\la}
     \multiput(  21.41,  31.01)(   2.121,   2.121){ 18}{\la}
     \multiput(  38.88,  41.66)(  -1.680,   2.486){ 10}{\la}
     \multiput(  80.00,  40.00)(  -0.140,   0.207){142}{\la}
     \multiput(  41.77,  39.08)(   2.662,  -1.384){  6}{\la}
     \multiput(  40.00,  80.00)(   0.222,  -0.115){ 90}{\la}
     \multiput(  41.12,  41.66)(   1.680,   2.486){ 10}{\la}
     \multiput(  21.53,  30.89)(   2.297,   1.930){ 26}{\la}
     \multiput(  40.74,  78.14)(   1.107,  -2.788){ 18}{\la}
     \multiput(  21.79,  68.72)(   2.690,  -1.327){ 22}{\la}
      \end{picture}}
 \end{picture}
\caption{ The first two tetrahedral grids for the computation in Tables \ref{t1}.  }
\label{grid}
\end{center}
\end{figure}
%
%
%
The first two grids of the 3D tetrahedral grids used in the computation are 
  plotted in Figure \ref{grid},
  where each cube is split into 12 tetrahedra.
In Table \ref{t1} we list the errors and the orders of convergence, 
	for the \ncb/$P_2^{\t{dis}}$ finite element \eqref{V-h}--\eqref{P-h},
    in solving problem \eqref{s3}.
 In the table, {\tt \#Uz} stands for the number of Uzawa iterations used in solving the
    saddle problem on a grid.
We have optimal orders of convergence for all solutions in all norms, 
   confirming the main theorems.  

  \begin{table}[htb]
  \caption{ Error profile by the \ncb/$P^{\t{dis}}_2$ finite element for problem \eqref{s3} on 
     Figure \ref{grid} grids.} \label{t1}
\begin{center}  
   \begin{tabular}{c|rr|rr|rr|r}  
 \hline 
grid &  $ \|\b u - \b u_h \|_{0}$ & rate &  $ \|\nabla(\b u - \b u_h) \|_{0}$ & rate &
   $ \| p - p_h \|_{0}$   & rate & \#Uz  \\ \hline 
 1&     0.231E+00&  0.0&     0.356E+01&  0.0&     0.295E+02&  0.0  &  83 \\
 2&     0.168E-01&  3.8&     0.421E+00&  3.1&     0.188E+01&  4.0  &  60 \\
 3&     0.178E-02&  3.2&     0.993E-01&  2.1&     0.685E+00&  1.5  &  65 \\
 4&     0.111E-03&  4.0&     0.133E-01&  2.9&     0.898E-01&  2.9  &  95 \\
 5&     0.669E-05&  4.0&     0.167E-02&  3.0&     0.114E-01&  3.0  &  85 \\

\hline 
\end{tabular} \end{center}  \end{table}

\section{Ethical Statement}

\subsection*{Compliance with Ethical Standards} { \ }
 
   The submitted work is original and is not published elsewhere in any form or language.

This article does not contain any studies involving animals.
This article does not contain any studies involving human participants.

\subsection*{Availability of supporting data}  
   Data sharing is not applicable to this article since no datasets were generated or collected 
 in the work.

\subsection*{Competing interests} 
All authors declare that they have no potential conflict of interest.

\subsection*{Funding}

Xuejun Xu was supported by National Natural Science Foundation of China (Grant
Nos. 12071350), Shanghai Municipal Science and Technology Major Project No.
2021SHZDZX0100, and Science and Technology Commission of Shanghai Municipality.

\subsection*{Authors' contributions}
All authors made equal contribution.


\begin{thebibliography}{999} 
 
\bibitem{Arnold-Qin} { D. N. Arnold and J. Qin},
   {   Quadratic velocity/linear pressure  Stokes elements}, in
    Advances in Computer Methods for Partial 
             Differential Equations VII, ed. R. Vichnevetsky
       and R.S. Steplemen, 1992.
                                                      
\bibitem{Bacuta} C. Bacuta, P. Vassilevski and S. Zhang, 
  A new approach for solving Stokes systems arising from a distributive relaxation method,
  Numer. Methods Partial Differential Equations 27 (2011), no. 4, 898--914. 
                

\bibitem{Baran}
A. Baran  and G. Stoyan, 
Gauss-Legendre elements: a stable, higher order non-conforming
finite element family,  
Computing 79 (2007), 1--21. 
 
 
\bibitem{Chen-Hu} 
W.~Chen, J.~Hu and M.~Zhang, 
 Nonconforming finite element methods of order two and order three for the Stokes flow in three dimensions,
  J. Sci. Comput. 97 (2023), no. 1, Paper No. 1, 31 pp.
  
\bibitem{Ciarlet} 
P. Ciarlet, C. F. Dunkl and S. A. Sauter, 
A family of Crouzeix-Raviart finite elements in 3D,
   Anal. Appl. (Singap.) 16 (2018), no. 5, 649--691.

\bibitem{Crouzeix-Falk}
 M. Crouzeix and R. S. Falk, 
Nonconforming finite elements for the Stokes problem,
 Math. Comp. 52 (1989), no. 186, 437--456.
 
\bibitem{Crouzeix-Raviart}
M. Crouzeix and P. A. Raviart, 
Conforming and nonconforming finite elements for solving the stationary Stokes equations I, 
Rev. Francaise Automat. Informat. Recherche Operationnelle Ser. Rouge, 7 (1973), pp. 33--75. 

\bibitem{Fabien-Neilan} 
 M. Fabien, J. Guzm\'an, M. Neilan and A. Zytoon, 
Low-order divergence-free approximations for the Stokes problem 
on Worsey-Farin and Powell-Sabin splits, 
   Comput. Methods Appl. Mech. Engrg. 390 (2022), Paper No. 114444.

 \bibitem{Falk-Neilan}
R. S. Falk and M. Neilan, 
Stokes complexes and the construction of stable 
finite elements with pointwise mass conservation, 
SIAM J. Numer. Anal. 51 (2013), no. 2, 1308--1326. 

 \bibitem{Fortin} M. Fortin, 
A three-dimensional quadratic nonconforming element,
Numer. Math. 46 (1985), no. 2, 269--279. 
  

 \bibitem{Fortin-2D} 
M. Fortin and M. Soulie,
A nonconforming piecewise quadratic finite element on triangles,
 Internat. J. Numer. Methods Engrg. 19 (1983), no. 4, 505--520. 


 \bibitem{Fu-Guzman-Neilan}
G. Fu, J. Guzm\'an and M. Neilan, 
Exact smooth piecewise polynomial sequences on Alfeld splits,
  Math. Comp. 89 (2020), no. 323, 1059--1091.


 \bibitem{Guzman-Neilan1}
Conforming and divergence-free Stokes elements on general triangular meshes. Math. Comp. 83 (2014), no. 285, 15--36. 

 \bibitem{Guzman-Neilan2}
  J. Guzm\'an and M. Neilan, 
Conforming and divergence-free Stokes elements in three dimensions. IMA J. Numer. Anal. 34 (2014), no. 4, 1489--1508. 

 \bibitem{Guzman-Neilan}
  J. Guzm\'an and M. Neilan, 
inf-sup stable finite elements on barycentric refinements producing divergence-free approximations in arbitrary dimensions,
 SIAM J. Numer. Anal. 56 (2018), no. 5, 2826--2844.

 \bibitem{Guzman-Lischke-Neilan}
  J. Guzm\'an, A. Lischke and M. Neilan, 
Exact sequences on Powell-Sabin splits, Calcolo 57 (2020), no. 2, Paper No. 13, 25 pp.

 \bibitem{Huang-Q2} Y. Huang and S. Zhang, 
 A lowest order divergence-free finite element on rectangular grids,
   Front. Math. China 6 (2011), no. 2, 253--270. 
 
 \bibitem{Matthies-Tobiska} G. Matthies and L. Tobiska,
Inf-sup stable non-conforming finite elements of arbitrary order on triangles,
Numer. Math. 102 (2005), no. 2, 293--309. 

\bibitem{Neilan}
 M. Neilan,  Discrete and conforming smooth de Rham complexes in three dimensions,
   Math. Comp. 84 (2015), no. 295, 2059--2081.


\bibitem{Qin} { J. Qin },
    On the convergence of some low order mixed finite elements for
   incompressible fluids, Thesis, Pennsylvania State University, 1994.
 
 
\bibitem{Sauter} 
  S. Sauter and C. Torres, On the Inf-Sup Stability of Crouzeix-Raviart Stokes Elements in
3D, Math. Comp. 92 (2023), No. 341, 1033--1059
   
\bibitem{Scott-Vogelius} L. R. Scott and M. Vogelius, 
  { Norm estimates for a maximal right inverse of the
    divergence operator in spaces of piecewise polynomials},
   RAIRO, Modelisation Math. Anal. Numer. 19 (1985), 111--143.
  
\bibitem{Scott-V} {  L. R. Scott and M. Vogelius},
  { Conforming finite element methods for incompressible and nearly
      incompressible continua},
  in Lectures in Applied Mathematics 22, 1985, 221--244.
              

\bibitem{Scott-Zhang} L.~R.~Scott and S.~Zhang, 
 Finite element interpolation of nonsmooth functions satisfying boundary conditions, Math. Comp. 54 (1990), no. 190, 483--493. 
 
 
\bibitem{Stenberg}
R. Stenberg, Error analysis of some finite element methods for the Stokes problem, Math.
Comp. 54 (1990), No. 190, 495--508.

\bibitem{Xu-Zhang}
X. Xu and S. Zhang, 
 A new divergence-free interpolation operator with applications to the Darcy-Stokes-Brinkman equations, SIAM J. Sci. Comput. 32 (2010), no. 2, 855--874.

    

\bibitem{Zhang-3D} {  S. Zhang},
  { A new family of stable mixed finite elements for 3D Stokes equations},
   Math. Comp.  74  (2005),   250, 543--554.

\bibitem{Zhang-ps2} {  S. Zhang},
  { On the P1 Powell-Sabin 
     divergence-free finite element  for the Stokes equations, }
	J. Comp. Math., 26 (2008), 456-470.  

\bibitem{Zhang-Qk} {  S. Zhang},
A family of $Q_{k+1,k}\times Q_{k,k+1}$
   divergence-free finite elements on rectangular grids,
 SIAM J. Numer. Anal. 47 (2009), no. 3, 2090--2107. 


\bibitem{Zhang-3d-P2} {  S. Zhang},
  Quadratic divergence-free finite elements on Powell-Sabin tetrahedral grids,
   Calcolo 48 (2011), no. 3, 211--244. 

\bibitem{Zhang-6}
S. Zhang,
Divergence-free finite elements on tetrahedral grids for $k\ge 6$,
    Math. Comp. 80 (2011), no. 274, 669--695.

 
\bibitem{Zhang-P2nc} S. Zhang,
A nonconforming P2 and discontinuous P1 mixed finite element on tetrahedral grids,
 preprint,  https://sites.udel.edu/szhang/publication/.
\end{thebibliography}
\end{document}